\newtheorem{thm}{Theorem}
\newtheorem{cor}{Corollary}
\newtheorem{lem}{Lemma}
\newtheorem{rem}{Remark}
\newtheorem{conj}{Conjecture}
\theoremstyle{definition}
\newtheorem{example}{Example}
\newtheorem{prob}{Problem}
\newcommand{\A}{{\mathcal A}}
\newcommand{\U}{{\mathcal U}}
\newcommand{\es}{{\mathcal S}}
\newcommand{\IC}{{\mathbb C}}
\newcommand{\ID}{{\mathbb D}}
\newcommand{\D}{{\mathbb D}}
\def\be{\begin{equation}}
\def\ee{\end{equation}}
\newcommand{\bee}{\begin{enumerate}}
\newcommand{\eee}{\end{enumerate}}
\newcommand{\blem}{\begin{lem}}
\newcommand{\elem}{\end{lem}}
\newcommand{\bthm}{\begin{thm}}
\newcommand{\ethm}{\end{thm}}
\newcommand{\bcor}{\begin{cor}}
\newcommand{\ecor}{\end{cor}}
\newcommand{\beg}{\begin{example}}
\newcommand{\eeg}{\end{example}}
\newcommand{\begs}{\begin{examples}}
\newcommand{\eegs}{\end{examples}}
\newcommand{\bdefe}{\begin{defin}}
\newcommand{\edefe}{\end{defin}}
\newcommand{\bprob}{\begin{prob}}
\newcommand{\eprob}{\end{prob}}
\newcommand{\bei}{\begin{itemize}}
\newcommand{\eei}{\end{itemize}}
\newcommand{\bcon}{\begin{conj}}
\newcommand{\econ}{\end{conj}}
\newcommand{\bcons}{\begin{conjs}}
\newcommand{\econs}{\end{conjs}}
\newcommand{\bprop}{\begin{propo}}
\newcommand{\eprop}{\end{propo}}
\newcommand{\br}{\begin{rem}}
\newcommand{\er}{\end{rem}}
\newcommand{\brs}{\begin{rems}}
\newcommand{\ers}{\end{rems}}
\newcommand{\bo}{\begin{obser}}
\newcommand{\eo}{\end{obser}}
\newcommand{\bos}{\begin{obsers}}
\newcommand{\eos}{\end{obsers}}
\newcommand{\bpf}{\begin{pf}}
\newcommand{\epf}{\end{pf}}
\newcommand{\ba}{\begin{array}}
\newcommand{\ea}{\end{array}}
\newcommand{\beq}{\begin{eqnarray}}
\newcommand{\beqq}{\begin{eqnarray*}}
\newcommand{\eeq}{\end{eqnarray}}
\newcommand{\eeqq}{\end{eqnarray*}}
\begin{document}
\title[New upper bounds of the third Hankel determinant]{New upper bounds of the third Hankel determinant for some classes of univalent functions}

\author[M. Obradovi\'{c}]{Milutin Obradovi\'{c}}
\address{Department of Mathematics,
Faculty of Civil Engineering, University of Belgrade,
Bulevar Kralja Aleksandra 73, 11000, Belgrade, Serbia}
\email{obrad@grf.bg.ac.rs}

\author[N. Tuneski]{Nikola Tuneski}
\address{Department of Mathematics and Informatics, Faculty of Mechanical Engineering, Ss. Cyril and Methodius
University in Skopje, Karpo\v{s} II b.b., 1000 Skopje, Republic of North Macedonia.}
\email{nikola.tuneski@mf.edu.mk}


\subjclass[2000]{30C45, 30C50}
\keywords{analytic, univalent, Hankel determinant, upper bound, starlike, starlike with respect to symmetric points.}

\begin{abstract}
In this paper we give improved, probably not sharp, upper bounds of the Hankel determinant of third order for various classes of univalent functions.
\end{abstract}


\maketitle

\section{Introduction and preliminaries}

\smallskip

A complex-valued function of a complex variable is called univalent in a certain domain if it does not take the same value twice on that domain. The theory of univalent functions is over a century old and is rich of different types of results. There are two major approaches for their study. One is to investigate the properties of the subclasses of univalent functions and the other is to deliver criteria for a function to belong to those subclasses.

In recent period, a problem that follows the first approach, that is the problem of finding upper bound, preferably sharp, of the Hankel determinant for classes of univalent functions, is being rediscovered and attracts significant attention among the mathematicians working in the field.

For a function $f$ from the class ${\mathcal A}$ of analytic functions in the unit disk $\ID := \{ z\in \IC:\, |z| < 1 \}$   normalized such that $f(0)=f'(0)-1=0$, i.e.,
$f(z)=z+a_2z^2+a_3z^3+\cdots$, the $qth$ Hankel determinant is defined for $q\geq 1$, and
$n\geq 1$ by
\[
        H_{q}(n) = \left |
        \begin{array}{cccc}
        a_{n} & a_{n+1}& \ldots& a_{n+q-1}\\
        a_{n+1}&a_{n+2}& \ldots& a_{n+q}\\
        \vdots&\vdots&~&\vdots \\
        a_{n+q-1}& a_{n+q}&\ldots&a_{n+2q-2}\\
        \end{array}
        \right |.
\]
Thus, the second Hankel determinant is $H_{2}(2)= a_2a_4-a_{3}^2$ and the third is
\[ H_3(1) =  \left |
        \begin{array}{ccc}
        1 & a_2& a_3\\
        a_2 & a_3& a_4\\
        a_3 & a_4& a_5\\
        \end{array}
        \right | = a_3(a_2a_4-a_{3}^2)-a_4(a_4-a_2a_3)+a_5(a_3-a_2^2).
\]
The concept of Hankel determinant finds its application in the theory of singularities (see \cite{dienes}) and in the study of power series with integral coefficients.

Finding sharp estimate of the Hankel determinant for the whole class of univalent functions is very difficult task and the most significant result in that direction
is the one of Hayman (\cite{hayman-68}) who showed that $|H_2(n)|\le An^{1/2}$, where A is an absolute constant, and that this rate of growth is the best possible.
On the other hand, the sharp upper bound of the second Hankel determinant for subclasses of univalent functions is more accessible.
For example, for the classes of starlike and convex functions it turns out to be 1 and $1/8$, respectively (Janteng et al., \cite{janteng-07}).
The classes of starlike and convex functions are defined respectively with
\[ \mathcal{S}^\ast = \left\{ f\in\A:\frac{zf'(z)}{f(z)}\prec \frac{1+z}{1-z}\right\}  \]
and
\[ \mathcal{C} = \left\{ f\in\A:1+\frac{zf''(z)}{f'(z)}\prec \frac{1+z}{1-z}\right\},  \]
where "$\prec$" denotes the usual subordination defined by: $f\prec g$, if, and only if, $f$ and $g$ are analytic in $\D$ and there exists a Schwarz function $\omega$
(analytic in $\D$, $\omega(0) = 0$ and $|\omega(z)| < 1$), such that $f(z) = g(\omega(z))$ for $z\in\D$. If $g$ is univalent, then $f\prec g$ is equivalent to $f(0)=g(0)$ and $f(\D)\subseteq g(\D)$.

The case of the third Hankel determinant was first studied by Babaloa in \cite{babaloa} and it appears to be much more complicated and the existing results are usually not sharp.
Some of the few known sharp estimates is $1/9$ for the classes $\mathcal{S}^\ast(1/2)$ (the class of starlike functions of order 1/2, such that $f\in\es^\ast(1/2)\subset \A$ if, and only if,
$\operatorname{Re}[zf'(z)/f(z)]>1/2$, $z\in\D$) given by Zaprawa et al. in \cite{zaprawa} and $4/135$  for the class $\mathcal{C}$ given by Kowalczyk et al. in \cite{Kowalczyk-18}.
For the class $\U$ defined by
\[\U =  \left\{ f\in\A: \left| \left [\frac{z}{f(z)} \right]^{2}f'(z)-1 \right|<1,\, z\in\D \right\}, \]
the authors have proven (\cite{MONT-2018-2}) sharp bounds to be
$$|H_{2}(2)|\leq1\quad \mbox{and}\quad  |H_{3}(1)|\leq \frac{1}{4}.$$

Other results on the second and the third Hankel determinant can be found in \cite{opoola,kwon-1,kwon-2,lee,MONT-2019-1,selvaraj,sokol,DTV-book,krishna}.

In this paper we will study the class of starlike functions, as well as, the following classes
\[\es^\ast_s= \left\{ f\in\A: \frac{2zf'(z)}{f(z)-f(-z)} \prec \frac{1+z}{1-z} \right\}, \]
\[\es^\ast_e= \left\{ f\in\A: \frac{zf'(z)}{f(z)}\prec  e^z\right\} \]
and
\[\es_q^\ast= \left\{ f\in\A: \frac{zf'(z)}{f(z)}\prec  z+\sqrt{1+z^{2}} \right\}. \]

The class $\es^\ast_s$ consists of functions starlike with respect to symmetric points introduced by Sakaguchi in \cite{saka} where he proved that these functions are close-to-convex,
and hence univalent. Mishra et al. in \cite{mishra} proved sharp bound of $|H_2(2)|$ on this class to be 1,
and non-sharp bound of $H_3(1)$ to be $5/2$.

For the class $\es^\ast_e$, Shi et al. in \cite{shi}  proved that $|H_3(1)|\le 0.50047781$.

In their paper \cite{opoola} Bello and Opoola considered the class $\mathcal{S}_q$ and found that $|H_{2}(2)|\leq\frac{39}{48}$ which was improved by the authors (\cite{MONT-2018-1}) to its sharp value $1/4$.
No estimate for $|H_3(1)|$ is known for this class. It can be verified that the function $z+\sqrt{1+z^{2}}$ maps the unit disk into the right half plane, so the class $\es_q^\ast$ contains only starlike functions,
i.e., $\es_q^\ast\subset\es^\ast$.

For this three classes we improve the existing estimates  of the modulus of the third Hankel determinant and we do it by using different approach than the common one.

Namely, the current research on the estimates of the Hankel determinant is done by application of a result about coefficients of Carath\'{e}odory functions
(functions from with positive real part on the unit disk) that involves Toeplitz determinants. This result is due to Carath\'{e}odory and Toeplitz, a proof of which can be found in Grenander and Szeg\H{o} (\cite{granader}).
The result itself can be found in \cite[Theorem 3.1.4, p.26]{DTV-book}.

On the other hand, in this paper, as we already did in \cite{MONT-2019-1}, we use different method, based on the estimates of the coefficients of  Shwartz function
due to Prokhorov and Szynal (\cite{Prokhorov-1984}). Here is that result.

\begin{lem}\label{lem-prok}
Let $\omega(z)=c_{1}z+c_{2}z^{2}+\cdots $ be a Schwarz function. Then, for any real numbers $\mu$ and $\nu$
the following sharp estimate holds
 $$\Psi(\omega)=|c_{3}+\mu c_{1}c_{2}+\nu c_{1}^{3}|\leq \Phi(\mu,\nu),$$
where $\Phi(\mu,\nu)$ is given in complete form in \cite[Lemma 2]{Prokhorov-1984}, and here we will use only
$$\Phi(\mu,\nu) =  \left\{\begin{array}{cc}
1, & (\mu,\nu) \in D_1\cup D_2\cup \{(2,1)\}  \\[2mm]
|\nu|, & (\mu,\nu) \in \cup_{k=3}^7 D_k
\end{array}
\right.,$$
where
\smallskip

$ D_1 = \left\{(\mu,\nu):|\mu|\le\frac12,\, -1\le\nu\le1 \right\}, $
\smallskip

$ D_2 = \left\{(\mu,\nu):\frac12\le|\mu|\le2,\, \frac{4}{27}(|\mu|+1)^3-(|\mu|+1)\le\nu\le1 \right\}, $
\smallskip

$ D_3 = \left\{(\mu,\nu):|\mu|\le\frac12,\, \nu\le-1 \right\}, $
\smallskip

$ D_4 = \left\{(\mu,\nu):|\mu|\ge\frac12,\, \nu\le-\frac23(|\mu|+1) \right\}, $
\smallskip

$ D_5 = \left\{(\mu,\nu):|\mu|\le2,\, \nu\ge1 \right\}, $
\smallskip

$ D_6 = \left\{(\mu,\nu):2\le|\mu|\le4,\, \nu\ge\frac{1}{12}(\mu^2+8) \right\}, $
\smallskip

$ D_7 = \left\{(\mu,\nu):|\mu|\ge4,\, \nu\ge\frac23(|\mu|-1) \right\}. $
\end{lem}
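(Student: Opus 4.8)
The plan is not to prove this lemma from scratch but to invoke it: the two branches $\Phi(\mu,\nu)=1$ and $\Phi(\mu,\nu)=|\nu|$ quoted above are exactly the cases of the sharp estimate of Prokhorov and Szynal \cite[Lemma~2]{Prokhorov-1984} that we shall need. It is nevertheless instructive to recall the idea behind such estimates. The first step would be to describe the initial coefficients of a Schwarz function by applying the Schur algorithm, which yields parameters $\gamma_2,\gamma_3\in\overline{\ID}$ with
$$c_2=(1-|c_1|^2)\,\gamma_2,\qquad c_3=(1-|c_1|^2)\bigl[(1-|\gamma_2|^2)\gamma_3-\overline{c_1}\,\gamma_2^{2}\bigr],$$
while $c_1\in\overline{\ID}$ is free, and conversely every such triple $(c_1,\gamma_2,\gamma_3)$ comes from some $\omega$, the extremal functions being finite Blaschke products (after a rotation, powers of $z$). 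Since $\mu$ and $\nu$ are real and replacing $\omega(z)$ by $e^{-i\theta}\omega(e^{i\theta}z)$ multiplies $c_3+\mu c_1 c_2+\nu c_1^3$ by $e^{2i\theta}$, one may assume $c_1=x\in[0,1]$.

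With this normalisation and $\gamma_2=ye^{i\varphi}$, $y\in[0,1]$, the parameter $\gamma_3$ enters $c_3+\mu c_1 c_2+\nu c_1^3$ linearly with coefficient $(1-x^2)(1-y^2)$, so the triangle inequality --- with $\gamma_3$ taken on the unit circle and suitably rotated --- gives
$$\Psi(\omega)\le(1-x^2)(1-y^2)+\bigl|\,x(1-x^2)\bigl(\mu y e^{i\varphi}-y^2e^{2i\varphi}\bigr)+\nu x^3\,\bigr|.$$
I would then maximise the right-hand side successively over $\varphi\in[0,2\pi)$, over $y\in[0,1]$, and over $x\in[0,1]$. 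Sharpness would follow by reading off, at each stage, the admissible values of $\gamma_3,\gamma_2,c_1$ that produce equality and converting them into an explicit extremal Schwarz function --- for instance $\omega(z)=z^3$ where the bound is $1$, and $\omega(z)=z$ where it is $|\nu|$.

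The hard part is exactly this last, piecewise optimisation: the location of the maximiser jumps as $(\mu,\nu)$ varies, so the parameter plane decomposes into the sets $D_1,\ldots,D_7$ (together with further sets in the complete version of the lemma) on which distinct formulas for $\Phi(\mu,\nu)$ are active. Verifying, region by region, that the extreme value equals $1$ on $D_1\cup D_2\cup\{(2,1)\}$ and $|\nu|$ on $D_3\cup\cdots\cup D_7$, and exhibiting a matching extremal $\omega$ in each, is the content of \cite[Lemma~2]{Prokhorov-1984}; in the present paper only these two branches occur, since every pair $(\mu,\nu)$ arising in the computations below lands in one of them.
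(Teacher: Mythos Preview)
Your approach matches the paper's: Lemma~\ref{lem-prok} is not proved in the paper at all but simply quoted from \cite{Prokhorov-1984}, so invoking the reference is exactly what is done there. Your additional sketch of the Schur-parametrisation idea and the piecewise optimisation is informative but goes beyond what the paper provides.
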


We will also use the following,  almost forgotten result of Carleson (\cite{carlson}).

\begin{lem}\label{lem-carl}
Let $\omega(z)=c_{1}z+c_{2}z^{2}+\cdots $ be a Schwarz function. Then
\[|c_2|\le1-|c_1|^2,\quad  |c_3|\le 1-|c_1|^2-\frac{|c_2|^2}{1+|c_1|} \quad\mbox{and}\quad |c_4|\le1-|c_1|^2 -|c_2|^2. \]
\end{lem}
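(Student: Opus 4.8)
My plan is to deduce all three estimates from the Schur algorithm. The engine is the following elementary consequence of the Schwarz--Pick lemma. If $\phi$ maps $\ID$ into $\overline{\ID}$, $\phi(z)=b_0+b_1z+b_2z^2+b_3z^3+\cdots$, and $|b_0|<1$, then
\[
G(z):=\frac1z\cdot\frac{\phi(z)-b_0}{1-\overline{b_0}\,\phi(z)}
\]
again maps $\ID$ into $\overline{\ID}$; writing $G(z)=g_0+g_1z+g_2z^2+\cdots$ and expanding $\phi=(b_0+zG)/(1+\overline{b_0}\,zG)$ gives
\[
b_1=(1-|b_0|^2)g_0,\qquad b_2=(1-|b_0|^2)(g_1-\overline{b_0}g_0^2),\qquad b_3=(1-|b_0|^2)(g_2-2\overline{b_0}g_0g_1+\overline{b_0}^2g_0^3).
\]
(If $|b_0|=1$ the maximum principle forces $\phi\equiv b_0$, so $b_1=b_2=b_3=0$ and such cases are trivial.) As $\phi$ ranges over all analytic maps of $\ID$ into $\overline{\ID}$ with a fixed value $b_0$ at the origin, $G$ ranges over all analytic maps of $\ID$ into $\overline{\ID}$; in particular $g_0=G(0)$ is an arbitrary point of $\overline{\ID}$, and, with $g_0$ fixed, $g_1$ runs over the whole disc $|g_1|\le1-|g_0|^2$.

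I apply this first with $\phi(z)=\omega(z)/z$ (which maps $\ID$ into $\overline{\ID}$ by the Schwarz lemma), so that $b_0=c_1$, $b_1=c_2$, $b_2=c_3$, $b_3=c_4$, obtaining a function $G$; then once more to $G$, obtaining $H(z)=h_0+h_1z+\cdots$ with
\[
g_1=(1-|g_0|^2)h_0,\qquad g_2=(1-|g_0|^2)(h_1-\overline{g_0}h_0^2),
\]
where $h_0\in\overline{\ID}$ is free and, with $h_0$ fixed, $h_1$ runs over $|h_1|\le1-|h_0|^2$. The first two estimates follow at once. From $|c_2|=(1-|c_1|^2)|g_0|\le1-|c_1|^2$ we get the first. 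For the second, $|g_1|\le1-|g_0|^2$ gives $|c_3|\le(1-|c_1|^2)(|g_1|+|c_1||g_0|^2)\le(1-|c_1|^2)(1-|g_0|^2+|c_1||g_0|^2)$, and substituting $|g_0|=|c_2|/(1-|c_1|^2)$ turns the right-hand side into exactly $1-|c_1|^2-|c_2|^2/(1+|c_1|)$.

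For the third estimate, substitute the formulas for $g_1,g_2$ into $c_4=(1-|c_1|^2)(g_2-2\overline{c_1}g_0g_1+\overline{c_1}^2g_0^3)$, collect the term containing $h_1$, and use that $h_1$ fills a disc of radius $1-|h_0|^2$ about $0$. Rotating $\omega$ so that $c_1=|c_1|=:x\ge0$, and writing $y=|g_0|=|c_2|/(1-x^2)$ and $s=|h_0|$, the free arguments of $g_0$ and $h_0$ collapse into one real parameter $\delta$, and one arrives at
\[
|c_4|\le(1-x^2)\left[(1-y^2)(1-s^2)+y\max_{\delta\in\IR}\left|\,x^2y^2-2x(1-y^2)s\,e^{i\delta}-(1-y^2)s^2e^{2i\delta}\,\right|\right].
\]
The decisive point is that one must \emph{not} bound the three trigonometric terms separately --- that crude bound overshoots $1-|c_1|^2-|c_2|^2$. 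Instead, the square of the modulus above is a downward-opening quadratic in $t=\cos\delta$, and a direct computation shows that its maximum over $t\in\IR$ (hence also over $t\in[-1,1]$) is the perfect square $\bigl(x^2y+(1-y^2)s^2/y\bigr)^2$. Therefore the bracket is at most $(1-y^2)(1-s^2)+x^2y^2+(1-y^2)s^2=(1-y^2)+x^2y^2$ (the $s^2$-terms cancel), and $|c_4|\le(1-x^2)\bigl(1-y^2(1-x^2)\bigr)=1-|c_1|^2-|c_2|^2$.

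The only genuine obstacle is that last computation: recognising that the maximum over $\delta$ of the displayed modulus squared collapses to a perfect square --- which is precisely what makes the estimate close --- and not being tempted into the lossy triangle-inequality split. The remainder is routine bookkeeping with the Schur recursion.
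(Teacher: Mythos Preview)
The paper does not prove this lemma at all; it simply quotes it as a result of Carlson \cite{carlson}. So there is no ``paper's own proof'' to compare against. Your Schur-algorithm argument is a genuine proof of the cited statement, and it is correct.

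A few remarks on the details. The first two inequalities are handled cleanly. For the third, your key identity is right: with $A=x^2y^2$, $B=2x(1-y^2)s$, $C=(1-y^2)s^2$ one has
\[
|A-Be^{i\delta}-Ce^{2i\delta}|^2=-4ACt^2+2B(C-A)t+(A+C)^2+B^2,\qquad t=\cos\delta,
\]
and the relation $B^2=4AC\,(1-y^2)/y^2$ (valid when $y>0$) makes the vertex value collapse to $(A+C)^2/y^2=\bigl(x^2y+(1-y^2)s^2/y\bigr)^2$, exactly as you claim. Multiplying by $y$ and adding $(1-y^2)(1-s^2)$ gives $1-y^2(1-x^2)$, and then $(1-x^2)\bigl(1-y^2(1-x^2)\bigr)=1-|c_1|^2-|c_2|^2$ on the nose.

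One small point of presentation: the phrase ``its maximum over $t\in\IR$ \ldots\ is the perfect square'' is literally true only when $AC>0$; in the degenerate cases ($x=0$, $y=0$, $y=1$, or $s=0$) the quadratic in $t$ is constant and its value is \emph{at most} your perfect square, so the inequality still goes through. It would be cleaner to note these boundary cases separately (they are trivial), or to phrase the perfect-square step as an upper bound rather than an equality. This is cosmetic; the argument is sound.
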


\medskip

\section{Main results}

\smallskip

In this section we give improved, probably non-sharp estimates of the third Hankel determinant for classes $\es^\ast$, $\es^\ast_s$, $\es^\ast_e$ and $\es_q^\ast$.

\begin{thm}\label{main-thm}
Let $f\in\A$ is of the form $f(z)=z+a_2z^2+a_3z^3+\cdots$.
\begin{itemize}
  \item[($i$)] If $f\in\es^\ast$, then $|H_3(1)|\le  0.777987\ldots$.
  \item[($ii$)] If  $f\in\es^\ast_s$, then $|H_3(1)|\le \frac14+  \frac{1}{3\sqrt3}=0.44245\ldots$.
  \item[($iii$)] If $f\in\es^\ast_e$, then $|H_3(1)|\le \frac{17}{72}=0.23611\ldots$.
  \item[($iv$)] If $f\in\es_q^\ast$, then $|H_3(1)|\le \frac{17}{72}=0.23611\ldots$.
\end{itemize}
\end{thm}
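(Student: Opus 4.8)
The plan is to treat all four parts by one mechanism: turn each subordination into an equation governed by a Schwarz function, compute $a_2,\dots,a_5$ so as to write $H_3(1)$ as an explicit polynomial in the Schwarz coefficients $c_1,c_2,c_3,c_4$, and then estimate that polynomial using Lemmas~\ref{lem-prok} and~\ref{lem-carl}.

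\emph{From the subordination to Schwarz coefficients.} Each of $(1+z)/(1-z)$, $e^z$, $z+\sqrt{1+z^2}$ is univalent on $\ID$, so $f\in\es^\ast$ (resp.\ $\es^\ast_e$, $\es_q^\ast$) means $zf'(z)/f(z)=\varphi(\omega(z))$ for a Schwarz function $\omega(z)=c_1z+c_2z^2+c_3z^3+c_4z^4+\cdots$, while $f\in\es^\ast_s$ means $2zf'(z)/(f(z)-f(-z))=\varphi(\omega(z))$ with $\varphi=(1+z)/(1-z)$. I would expand $\varphi(\omega(z))$ through $z^4$ (using $\sqrt{1+z^2}=1+\tfrac12z^2-\tfrac18z^4+\cdots$ in the third case), match it against $zf'(z)/f(z)$ --- or, for $\es^\ast_s$, against $2zf'(z)/(f(z)-f(-z))$, whose denominator $f(z)-f(-z)=2(z+a_3z^3+a_5z^5+\cdots)$ only involves $a_3,a_5$ --- and solve recursively for $a_2,a_3,a_4,a_5$ as polynomials in $c_1,\dots,c_4$.

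\emph{Algebraic reduction of $H_3(1)$.} Substituting into $H_3(1)=2a_2a_3a_4-a_3^3-a_4^2+a_5(a_3-a_2^2)$, one should see large cancellations. For $\es^\ast_s$ in particular $a_3-a_2^2=c_2$ and $H_3(1)=\tfrac12c_2c_4+\tfrac12c_1^2c_2^2-\tfrac14(c_3-c_1c_2)^2$; for the other three classes $c_4$ still enters only through $a_5$, hence linearly with coefficient a multiple of $a_3-a_2^2$, and $c_3$ enters quadratically only through $-a_4^2$, so completing the square in $c_3$ should bring $H_3(1)$ to the shape
\[
H_3(1)=\kappa\, c_4\,(\text{linear in }c_1^2,c_2)\;-\;\lambda\,(c_3+\mu c_1c_2+\nu c_1^3)^2\;+\;P(c_1,c_2),
\]
with $\lambda=\tfrac14,\tfrac19,\tfrac19,\tfrac49$ for $\es^\ast_s,\es^\ast_e,\es_q^\ast,\es^\ast$ respectively, and $P$ divisible by $c_1^2$.

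\emph{Estimation and optimization.} I would then bound $|c_4|$ by Carleson's $|c_4|\le1-|c_1|^2-|c_2|^2$, bound $|c_3+\mu c_1c_2+\nu c_1^3|\le\Phi(\mu,\nu)$ by Lemma~\ref{lem-prok} --- checking that the $(\mu,\nu)$ that arises lies in $D_1$ or $D_2$, so $\Phi(\mu,\nu)=1$ --- and bound leftover occurrences of $c_2$ by $|c_2|\le1-|c_1|^2$. Writing $t=|c_1|$, $p=|c_2|$, this gives $|H_3(1)|\le\lambda+F(t,p)$ on $\{0\le t\le1,\ 0\le p\le1-t^2\}$. For $\es^\ast_s$, $F$ collapses to $\tfrac12p\big((1-t^2-p^2)+t^2p\big)\le\tfrac12p(1-p^2)$, maximal at $p=1/\sqrt3$, so $|H_3(1)|\le\tfrac14+\tfrac{1}{3\sqrt3}$. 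For $\es^\ast_e$ and $\es_q^\ast$, once $P$ is written in factored form the residual $F(t,p)$ is small enough that a crude termwise bound (using $p\le1-t^2$) gives $F\le\tfrac18$, whence $|H_3(1)|\le\tfrac19+\tfrac18=\tfrac{17}{72}$. For $\es^\ast$ the coefficients are of Koebe size, $F$ is a genuine two-variable function, and its maximum $0.777987\ldots$ has to be found by calculus.

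\emph{Where the difficulty lies.} The essential obstacle --- and the reason the bounds are advertised as probably not sharp --- is the $c_3^2$ coming from $-a_4^2$: it blocks a direct, sharp use of Lemma~\ref{lem-prok} and forces one to complete the square and then throw away the negative sign by replacing $(c_3+\mu c_1c_2+\nu c_1^3)^2$ by $\Phi(\mu,\nu)^2$. Beyond that, the computation of $a_5$ (equivalently of the $z^4$-coefficient of $\varphi(\omega)$) is where arithmetic slips happen, one must verify which Prokhorov--Szynal region $(\mu,\nu)$ occupies, and for $\es^\ast$ the final maximization over $\{0\le t\le1,\ 0\le p\le1-t^2\}$ is a non-routine two-variable problem.
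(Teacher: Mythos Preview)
Your plan matches the paper's mechanism: pass to Schwarz coefficients, rewrite $H_3(1)$, and estimate via Lemmas~\ref{lem-prok} and~\ref{lem-carl}; for~($ii$) your formula $H_3(1)=\tfrac12c_2c_4+\tfrac12c_1^2c_2^2-\tfrac14(c_3-c_1c_2)^2$ and the subsequent argument are exactly the paper's. The only divergence is how the $c_3$-terms are packaged in~($i$),~($iii$),~($iv$). You propose a single full completion of the square, $-\lambda(c_3+\mu c_1c_2+\nu c_1^3)^2$, with one application of Lemma~\ref{lem-prok}. The paper instead absorbs only the $c_1c_2c_3$ cross-term into the square and leaves the $c_1^3c_3$ piece outside,
\[
H_3(1)=-\lambda\,(c_3+\mu c_1c_2)^2+\alpha\,c_1^3\,(c_3+\mu' c_1c_2+\nu' c_1^3)+(\text{terms in }c_1,c_2,c_4),
\]
and then invokes Lemma~\ref{lem-prok} \emph{twice}, bounding the second factor by $1$ to get $\alpha|c_1|^3$. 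Both decompositions are legitimate, and for~($iii$) and~($iv$) either route collapses (after the same use of $|c_2|\le1-|c_1|^2$ and $|c_4|\le1-|c_1|^2-|c_2|^2$) to $\tfrac19+\tfrac18=\tfrac{17}{72}$. For~($i$), however, the residual two-variable function you would have to maximise differs from the paper's $h(x,y)$, so your version need not produce exactly $0.777987\ldots$; that particular constant is an artefact of the paper's two-piece split, and with a single completed square you should expect a nearby but not identical number.
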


\begin{proof}$ $
\noindent ($i$) Let $f\in\es^\ast$. From the definitions of starlikeness and subordination we realize that there exists a function $\omega$ analytic in the unit disk such that $\omega(0)=0$, $|\omega(z)|<1$ for all $z\in\D$ and
  \[ \frac{zf'(z)}{f(z)} = \frac{1+\omega(z)}{1-\omega(z)}, \]
  leading to
  \[ zf'(z)(1-\omega(z)) = f(z)(1+\omega(z)).\]
  Using  $f(z)=z+a_2z^2+a_3z^3+\cdots$ and $\omega(z)=c_1z+c_2z^2+c_3z^3+\cdots$, by equating coefficients, we receive
  \[
  \begin{split}
  a_2 &= 2c_1, \\
  a_3 &= c_2+3c_1^2, \\
  a_4 &= \frac23\left(c_3+5c_1c_2+6c_1^3\right), \\
  a_5 &= \frac12\left(c_4+\frac{14}{3}c_1c_3+\frac{43}{3}c_1^2c_2+2c_2^2+10c_1^4\right),
  \end{split}
  \]
  and further
  \[
  \begin{split}
  H_3(1) &= \frac{1}{18}\left[ -8\left( c_3-\frac54c_1c_2 \right) -\frac{63}{8}c_1^2c_2^2 +6c_1^3\left(c_3+\frac12c_1c_2\right) +9(c_2-c_1^2)c_4 \right].
  \end{split}
  \]
  The last implies
  \[
  \begin{split}
  |H_3(1)| &\le \frac{1}{18}\left[ 8\left| c_3-\frac54c_1c_2 \right| +\frac{63}{8}|c_1|^2|c_2|^2 +6|c_1|^3\left|c_3+\frac12c_1c_2\right| +9|c_2-c_1^2||c_4| \right].
  \end{split}
  \]
  Now, from Lemma \ref{lem-prok} by using $\mu=-\frac{5}{4}$, $\nu =0$, $(\mu,\nu)\in D_2$; and $\mu=\frac12$, $\nu =0$, $(\mu,\nu)\in D_1$; we receive respectively
  \[ \left|c_3-\frac{5}{4}c_1c_2\right|\le 1 \quad\mbox{and}\quad  \left|c_3+\frac12 c_1c_2\right|\le1. \]
  Therefore, using additionally the estimate for $|c_4|$ from  Lemma \ref{lem-carl} we receive
  \[
  \begin{split}
  |H_3(1)|
  &\le \frac{1}{18}\left[ 8 +\frac{63}{8}|c_1|^2|c_2|^2 +6|c_1|^3 +9|c_2-c_1^2|(1-|c_1|^2-|c_2|^2) \right]\\
  &= \frac{1}{18}\left[ 8 -\frac{9}{8}|c_1|^2|c_2|^2 - 9|c_1|^2|c_2| +9|c_1|^2 +6|c_1|^3 - 9|c_1|^4 + 9|c_2| - 9|c_2|^3 \right]\\
  &= \frac{1}{18}\left[ 8 + h(|c_1|,|c_2|)\right],
      \end{split}
   \]
  where
  \[ h(x,y) = -\frac{9}{8}x^2y^2 - 9x^2y +9x^2 +6x^3 - 9x^4 + 9y - 9y^3 \]
  and $(x,y)\in \Omega=\{(x,y):0\le x\le1, 0\le y\le 1-x^2\}$.

  \medskip
  We continue with looking for the maximal value of $h$ on $\Omega$.

  \medskip

  The equation  $h'_y(x,y)=-\frac{9}{4}  \left[x^2 (y+4)+12 y^2-4\right]=0$ has positive solution $x=2\sqrt{\frac{1-3y^2}{4+y}}=:g(y)$ only when $0\le y\le \frac{\sqrt3}{3}=0.57735\ldots$. After replacing it in $h'_x$
  we receive
  \[ h'_x(g(y),y) = \frac{9 \sqrt{1-3 y^2} }{2 (y+4)^{3/2}} \left[16 \sqrt{(y+4)(1-3 y^2)}- y^3+180y^2-24y - 32\right] . \]
The equation $h'_x(g(y),y)=0$ has only two positive solutions $y_1=0.1541\ldots$ with $x_1=g(y_1)=0.94567\ldots$ and $y_1>1-x_1^2=0.1057\ldots$; and $y_2=\frac{1}{\sqrt3}=0.57735\ldots$ with $x_2=g(y_2)=0$, but none of these points $(x_1,y_1)$ and $(x_2,y_2)$ lies in the interior of $\Omega$. Thus, the function $h$ attains its maximum on the boundary of $\Omega$.

\medskip

For $x=0$ we have $h(0,y)=9 y(1-y^2)\le 2\sqrt3=3.4641\ldots$  for $y=\frac{1}{\sqrt3}$.

For $x=1$ we have $h(1,y)=-9 y^3-\frac{9 y^2}{8}+6\le 6$  for $y=0$.

For $y=0$ we have $h(x,0)=x^2(-9 x^2+6 x+9) \le 6$  for $x=1$.

For $y=1-x^2$ we have $h(x,1-x^2)= \frac{3}{8} x^2 \left(21 x^4-66 x^2+16x+45\right)=:r(x)$, with $r'(x)=-\frac94x(x-1)\left( 21x^3+21x^2-23x-15 \right)=0$ having unique solution on $(0,1)$, $x_*=0.948542\ldots$. So, $h(x,1-x^2)\le 6.00376\ldots$.

\medskip

For all the above analysis we can conclude that $h(x,y)\le 6.00376\ldots$ for all $(x,y)\in\Omega$, i.e., $|H_3(1)|\le  \frac{1}{18}(8+6.00376\ldots)=0.777987\ldots$.

\medskip

  \noindent ($ii$) Similarly as in ($i$) for a function $f$ from $\es^\ast_s$ we have that there exists  a Schwarz function $\omega$ such that
  \[ \frac{2zf'(z)}{f(z)-f(-z)} = \frac{1+\omega(z)}{1-\omega(z)}, \]
  i.e.,
  \[ 2zf'(z)(1-\omega(z)) = (f(z)-f(-z))(1+\omega(z)). \]
  For $f(z)=z+a_2z^2+a_3z^3+\cdots$ and $\omega(z)=c_1z+c_2z^2+c_3z^3+\cdots$, by equating coefficients, we receive
  \[
  \begin{split}
  a_2 &= c_1, \\
  a_3 &= c_2+c_1^2, \\
  a_4 &= \frac12\left(c_3+3c_1c_2+2c_1^3\right), \\
  a_5 &= \frac12\left(c_4+2c_1c_3+5c_1^2c_2+2c_2^2+2c_1^4\right),
  \end{split}
  \]
  and
  \[
  \begin{split}
  H_3(1) &= \frac14 \left(- c_3^2 + 2 c_1 c_2 c_3  +c_1^2 c_2^2   + 2 c_2 c_4\right) \\
         &= \frac14 \left[- (c_3 - c_1 c_2)^2  +2c_1^2 c_2^2 + 2c_2c_4\right].
  \end{split}
  \]
  Thus,
  \[ |H_3(1)| \le  \frac14 \left(|c_3 - c_1 c_2|^2  +2|c_1|^2 |c_2|^2 + 2|c_2||c_4|\right),\]
and by applying Lemma \ref{lem-prok} ($\mu=-1$ and $\nu=0$) and Lemma \ref{lem-carl}:
  \[
  \begin{split}
  |H_3(1)| &\le  \frac14 \left[1  +2|c_1|^2 |c_2|^2 + 2|c_2|(1-|c_1|^2-|c_2|^2)\right]\\
  &=  \frac14 \left[1  -2|c_1|^2 |c_2|(1-|c_2|)   + 2|c_2| -2|c_2|^3 \right]\\
  &\le  \frac14 \left[1 + 2|c_2| -2|c_2|^3 \right]\\
  &\le \frac14+\frac{1}{3\sqrt3}.
  \end{split}
  \]

\smallskip

  \noindent ($iii$) If $f\in\es^\ast_e$, then for a Schwarz function $\omega$, $\frac{zf'(z)}{f(z)}= e^{\omega(z)}$  and $ zf'(z)= e^{\omega(z)} f(z)$.
Thus, for   $f(z)=z+a_2z^2+a_3z^3+\cdots$ and $\omega(z)=c_1z+c_2z^2+c_3z^3+\cdots$, by equating coefficients, we receive
  \[
  \begin{split}
  a_2 &= c_1, \\
  a_3 &= \frac12c_2+\frac34c_1^2, \\
  a_4 &= \frac13\left(c_3+\frac52c_1c_2+\frac{17}{12}c_1^3\right), \\
  a_5 &= \frac14\left(c_4+\frac73c_1c_3+\frac{10}{3}c_1^2c_2+\frac{19}{18}c_2^2+c_1^4\right),
    \end{split}
  \]
and
  \[
  \begin{split}
   H_3(1) &= \frac{5}{72}c_1c_2c_3-\frac{5}{72}c_1^2c_2^2+\frac{13}{864}c_1^4c_2+\frac{17}{432}c_1^3c_3-\frac19c_3^2-\frac{13}{5184}c_1^6 \\
   &\quad +\frac{1}{16}(2c_2-c_1^2)c_4 \\
   &= -\frac19\left( c_3-\frac{15}{16}c_1c_2 \right)^2 -\frac{15}{256}c_1^2c_2^2  + \frac{17}{432}c_1^3\left(c_3+\frac{13}{34}c_1c_2-\frac{13}{204}c_1^3\right) \\
   &\quad + \frac{1}{16}(2c_2-c_1^2)c_4.
  \end{split}
  \]
  From here,
  \[
  \begin{split}
  |H_3(1)| &\le  \frac19\left| c_3-\frac{15}{16}c_1c_2 \right|^2 +\frac{15}{256}|c_1|^2|c_2|^2  + \frac{17}{432}|c_1|^3\left|c_3+\frac{13}{34}c_1c_2-\frac{13}{204}c_1^3\right| \\
   &\quad + \frac{1}{16}(2|c_2|+|c_1|^2)|c_4|.
   \end{split}
   \]
  Choosing $(\mu,\nu)=(-5/16,0)\in D_1$  and $(\mu,\nu)=\left(\frac{13}{34},-\frac{13}{204}\right)\in D_1$ in Lemma \ref{lem-prok} we receive
  \[ \left| c_3-\frac{15}{16}c_1c_2 \right|\le1 \quad\mbox{and}\quad  \left|c_3+\frac{13}{34}c_1c_2-\frac{13}{204}c_1^3\right| \le 1,\]
  respectively. This, together with the estimate for $|c_4|$ from Lemma \ref{lem-carl} gives
    \[
  \begin{split}
  |H_3(1)|
  &\le  \frac19+\frac{15}{256}|c_1|^2|c_2|^2 + \frac{17}{432}|c_1|^3+ \frac{1}{16}(2|c_2|+|c_1|^2)(1-|c_1|^2-|c_2|^2) \\
  &\le  \frac19-\frac{1}{256}|c_1|^2|c_2|^2 + \frac{17}{432}|c_1|^3+ \frac{1}{8}|c_2| - \frac18|c_1|^2|c_2|\\
  &\quad -\frac18|c_2|^3+\frac{1}{16}|c_1|^2 - \frac{1}{16}|c_1|^4\\
  &\le  \frac19-\frac{1}{256}|c_1|^2|c_2|^2 + \frac{17}{432}|c_1|^3+ \frac{1}{8}(1-|c_1|^2) - \frac18|c_1|^2|c_2|\\
  &\quad -\frac18|c_2|^3+\frac{1}{16}|c_1|^2 - \frac{1}{16}|c_1|^4\\
  &\le  \frac{17}{72}-\frac{1}{256}|c_1|^2|c_2|^2 - \frac{1}{16}|c_1|^2 \left( 1-\frac{17}{27}|c_1|+|c_1|^2 \right) - \frac{1}{8}|c_1|^2|c_2| - \frac18|c_2|^3\\
  &\le  \frac{17}{72},
  \end{split}
   \]
  since all other terms are negative.

  \smallskip

  \noindent ($iv$) Again, if $f\in\es_q^\ast$, then $\frac{zf'(z)}{f(z)}=  \omega(z)+\sqrt{1+\omega^2(z)} $ for some Schwarz function $\omega$, and for $f(z)=z+a_2z^2+a_3z^3+\cdots$
  and $\omega(z)=c_1z+c_2z^2+c_3z^3+\cdots$, by equating the coefficients, we receive
    \[
  \begin{split}
  a_2 &= c_1, \\
  a_3 &= \frac12c_2+\frac34c_1^2, \\
  a_4 &= \frac13\left(c_3+\frac52c_1c_2+\frac54 c_1^3\right), \\
  a_5 &= \frac14\left(c_4+\frac73c_1c_3+\frac{17}{6}c_1^2c_2+c_2^2+\frac23c_1^4\right),
    \end{split}
  \]
i.e.,
  \[
  \begin{split}
   H_3(1) &= -\frac19 \left(c_3-\frac{5}{16}c_1c_2\right)^2 - \frac{31}{256}c_1^2c_2^2 + \frac{11}{144}     c_1^3\left(c_3+\frac{5}{11}c_1c_2-\frac{7}{44}c_1^3\right) \\
   &\quad + \frac{1}{8}\left(c_2-\frac12c_1^2\right)c_4.
  \end{split}
  \]
So,
  \[
  \begin{split}
   |H_3(1)|
   &\le \frac19 \left|c_3-\frac{5}{16}c_1c_2\right|^2 + \frac{31}{256}|c_1|^2|c_2|^2 + \frac{11}{144}     |c_1|^3\left|c_3+\frac{5}{11}c_1c_2-\frac{7}{44}c_1^3\right| \\
   &\quad + \frac{1}{8}\left(|c_2|+\frac12|c_1|^2\right)|c_4|
  \end{split}
  \]
Now, Lemma \ref{lem-prok} for $(\mu,\nu)=\left( -\frac{5}{16},0 \right)\in D_1$  and $(\mu,\nu)=\left( \frac{5}{11}, -\frac{7}{44} \right)\in D_1$  brings
\[\left|c_3-\frac{5}{16}c_1c_2\right|\le1 \quad\mbox{and}\quad \left|c_3+\frac{5}{11}c_1c_2-\frac{7}{44}c_1^3\right|\le1,\]
which together with the estimate for $|c_4|$ from Lemma \ref{lem-carl} implies
  \[
  \begin{split}
   |H_3(1)|
   &\le \frac19  + \frac{31}{256}|c_1|^2|c_2|^2 + \frac{11}{144}  |c_1|^3 + \frac{1}{8}\left(|c_2|+\frac12|c_1|^2\right)(1-|c_1|^2-|c_2|^2) \\
   &= \frac19  + \frac{15}{256}|c_1|^2|c_2|^2 + \frac{11}{144}  |c_1|^3 +  \frac{1}{8}|c_2| - \frac18|c_1|^2|c_2| - \frac18|c_2|^3 \\
   &\quad +\frac{1}{16}|c_1|^2 - \frac{1}{16}|c_1|^4\\
   &\le \frac19  + \frac{15}{256}|c_1|^2|c_2|^2 + \frac{11}{144}  |c_1|^3 +  \frac{1}{8}(1-|c_1|^2) - \frac18|c_1|^2|c_2| - \frac18|c_2|^3 \\
   &\quad +\frac{1}{16}|c_1|^2 - \frac{1}{16}|c_1|^4\\
   &\le \frac{17}{72} -  \frac18|c_1|^2|c_2|\left(1-\frac{15}{32}|c_2| \right) - \frac{1}{16}  |c_1|^2 \left( |c_1|^2-\frac{11}{9}|c_1|+1 \right) -\frac18|c_2|^3\\
   &\le  \frac{17}{72}.
  \end{split}
  \]
\end{proof}

\smallskip

Estimates of the third Hankel determinant for classes $\es^\ast$ and $\es_q^\ast$ (($i$) and ($iv$) from the previous theorem) were not given before, while
estimates for classes $\es_s^\ast$ and $\es_e^\ast$ ($0.26897\ldots$ and $0.125$ from ($ii$) and ($iii$), respectively) are better then the currently known
obtained by Carath\'{e}odory functions ($5/2$ from \cite{mishra} and $0.50047781\ldots$ from \cite{shi}, respectively).

\smallskip

\medskip

\end{document}